\newcommand{\beq}{\begin{eqnarray*}}
\newcommand{\feq}{\end{eqnarray*}}
\newcommand{\beqn}{\begin{eqnarray}}
\newcommand{\feqn}{\end{eqnarray}}
\newtheorem{theorem}{Theorem}[section]
\newtheorem{lemma}[theorem]{Lemma}
\theoremstyle{definition}
\theoremstyle{remark}
\numberwithin{equation}{section}
\newtheorem*{theorem*}{Theorem}
\begin{document}
\title[Wave breaking in a class of non-local conservation laws]{Wave breaking in a class of non-local conservation laws}

\author{Yongki Lee$^\dag$}
\address{$^\dag$Department of Mathematical Sciences, Georgia Southern University, Statesboro,  Georgia 30458}
\email{yongkilee@georgiasouthern.edu}
\keywords{nonlocal conservation law, wave-breaking, blow-up, critical threshold, traffic flow, Whitham equation}
\subjclass[2010]{Primary, 35L65; Secondary, 35L67} 
\begin{abstract} 
For models describing water waves, Constantin and Escher \cite{CE98}'s works have long been considered as the cornerstone method for proving wave breaking phenomena. Their rigorous analytic proof shows that if the lowest slope of flows can be controlled by its highest slope initially, then the wave-breaking occur for the Whitham-type equation.  Since this breakthrough, there have been numerous refined wave-breaking results established by generalizing  the kernel which describes the dispersion relation of water waves. Even though the proofs of these involve a system of coupled Riccati-type differential inequalities, however, little or no attention has been made to a generalization of this Riccati-type system. In this work, from a rich class of non-local conservation laws, a Riccati-type system that governs the flow's gradient is extracted and investigated. The system's leading coefficient functions are allowed to change their values and signs over time as opposed to the ones in many of other previous works are fixed constants. Up to the author's knowledge, the blow-up analysis upon this structural generalization is new and is of theoretical interest in itself as well as its application to various non-local flow models. The theory is illustrated via the Whitham-type equation with nonlinear drift. Our method is applicable to a large class of non-local conservation laws.

\end{abstract}
\maketitle

\section{Introduction}
Wave breaking phenomena$-$bounded solutions with unbounded derivatives$-$usually appeared in water wave and traffic flow models. The aim of the present work is to investigate the wave breaking phenomena for a class of non-local conservation laws,
\begin{equation}\label{1main}
\left\{
  \begin{array}{ll}
    \partial_t u + \partial_x F(u, \bar{u}) =0, & t>0, x \in \mathbb{R}, \\
    u(0,x)=u_0 (x), &   x\in \mathbb{R},\hbox{}
  \end{array}
\right.
\end{equation}
where $u$ is the unknown, $F$ is a given smooth function, and $\bar{u}$ is given by
\begin{equation}\label{ubar}
\bar{u}(t,x)=(K*u)(t,x)=\int_{\mathbb{R}} K(x-y)u(t,y) \, dy,
\end{equation}
where $K$ to be chosen later. The nonlinear advection couples both local and nonlocal mechanism. 
This class of conservation laws, identified in \cite{LL15},  appears in several applications including  traffic flows \cite{SK06, KP09},  the collective motion of biological cells \cite{DS05, MDS08}, dispersive water waves \cite{GW74, Holm, Liu0},   the radiating gas motion \cite{Ha71, LT02}, high-frequency waves in relaxing medium \cite{Hunter, Vak1, Vak2} {\color{black} and the kinematic sedimentation model \cite{Kynch, KZ99, BBKT}.}

One of aspects of our interest in this class of non-local conservation laws is the critical threshold phenomena, as in many other hyperbolic balance laws. As is known that the typical well-posedness result asserts that either a solution of a time-dependent PDE exists for all time or else there is a finite time such that some norm of the solution becomes unbounded as the life span is approached. The natural question is whether there is a critical threshold for the initial data such that the persistence of the $C^1$ solution regularity depends only on crossing such a critical threshold. This concept of critical threshold(CT) and associated methodology is originated and developed in a series of papers by Engelberg, Liu and Tadmor \cite{ELT01, LT02} for a class of Euler-Poisson equations.

The wave breaking criterion is established for several model equations of the form \eqref{1main} and the Euler-Poisson system. There are some distinguished special cases of \eqref{1main} with the kernel $K$:\\
$\bullet$ A traffic flow model with Arrhenius look-ahead dynamics \cite{SK06}
$$u_t + [u(1-u)e^{-\bar{u}}]_x =0,$$
corresponding to \eqref{1main} with $F(u, \bar{u})=u(1-u)e^{-\bar{u}}$ and 
\begin{equation}\label{traffic_vanilla_kernel}
K(r)=
\left\{
  \begin{array}{ll}
K_0 / \gamma, & $if$ -\gamma \leq r \leq 0, \\
0, &   $otherwise$;\hbox{}
  \end{array}
\right.
\end{equation}
$\bullet$ A shallow water model proposed by Whitham \cite{GW74}
$$u_t + \frac{3c_0}{2h_0}uu_x + \bar{u}_x =0,$$
corresponding to \eqref{1main} with $F(u, \bar{u})=\frac{3c_0}{4h_0} u^2 + \bar{u}$ and $K(r)=\frac{\pi}{4}\exp(-\pi |r|/2)$;\\
$\bullet$ The hyperbolic Keller-Segel model with logistic sensitivity \cite{DS05}
\begin{equation}
\left\{
  \begin{array}{ll}
     u_t + [u(1-u)\partial_x S]_x=0,\\
    - S_{xx} + S = u, & \hbox{}
  \end{array}
\right.
\end{equation}
corresponding to \eqref{1main} with $F(u, \bar{u})=u(1-u)\bar{u}$ and $K(r)=\partial_r (e^{-|r|}/2)$;\\
$\bullet$ A nonlocal dispersive equation modeling particle suspensions \cite{JR90, RK89, KZ99}
$$u_t + u_x + ((K_a * u)u)_x =0,$$
corresponding to \eqref{1main} with $F(u, \bar{u})=u + \bar{u}u$, $K_a (r)=a^{-1} K(r/a)$ and
\begin{equation}
K(r)=
\left\{
  \begin{array}{ll}
  2/(3(r^2 /4 -1)) & $if$ \ |r| < 2, \\
0, &   $otherwise$.\hbox{}
  \end{array}
\right.
\end{equation}
$\bullet$ A traffic flow model with nonlocal-concave-\emph{convex} flux \cite{L-P, HA91, TL03}
$$u_t + [u(1-u)^2 e^{-\bar{u}}]_x =0,$$
corresponding to \eqref{1main} with $F(u, \bar{u})=u(1-u)^2 e^{-\bar{u}}$, and the kernel in \eqref{traffic_vanilla_kernel}.

Blow-up techniques are quite particular to each type of equation, there is no general method. Motivated by Seliger \cite{S68}, Constantin and Escher \cite{CE98}'s ingenious works, our goal in this work is to establish a quite representative sample of methods to accomplish wave-breaking for a class of nonlocal conservation laws.

Seliger \cite{S68} and Constantin and Escher \cite{CE98}'s works have been considered as the cornerstone method for proving wave breaking for Whitham-type equation,
\begin{equation}\label{1WT}
u_t + uu_x + \int_{\mathbb{R}} K(x-z)u_{z}(t,z) \, dz =0,
\end{equation}
with bounded and integrable kernel. Seliger's ingenious argument is formally tracing the dynamics of
\begin{equation}\label{1m12}
m_1 (t): = \min_{x\in \mathbb{R}} [u_x (t,x)], \ \ \ and \ \ \ m_2 (t): = \max_{x\in \mathbb{R}} [u_x (t,x)],
\end{equation}
along two different curves  $\xi_1 (t):=\mathrm{argmin}_{x} [u_x (t,x)]$ and  $\xi_2 (t):=\mathrm{argmax}_x [u_x (t,x)]$.
More precisely, by differentiating \eqref{1WT} and evaluating it at $x=\xi_i (t)$,  he obtained the following Riccati type equation
\begin{equation}\label{1m}
\frac{D}{Dt}m_i = -m^2 _i + \cdots, \ \  i=1,2
\end{equation}
which yield the desired wave breaking. Here $\frac{D}{Dt}$ is the convective derivative, i.e. $\frac{D}{Dt}:=\partial_t + u\partial_x$. To carry out Seliger's formal analysis, one needs to assume that the curves $\xi_i (t)$, $i=1,2$ are smooth. This additional strong assumption was shown unnecessary later by Constantin and Escher.

Following their methods, the author \cite{LL15} identified sub-thresholds for finite time shock formation in a class of nonlocal conservation laws, which we summarize here.
Under the following two assumptions: \\
($H_1$).     $F \in C^3 (\mathbb{R}, \mathbb{R})$, and the kernel $K(r) \in W^{1,1} $ satisfying
$$
K(r)=
\left\{
  \begin{array}{ll}
    Nondecreasing, & \hbox{$r\leq 0$,} \\
    0, & \hbox{$r>0$.}
  \end{array}
\right.
$$
($H_2$).   $F(0, \cdot)=F(m, \cdot)=0$ and
$$F_{uu} < 0 , \ F_{\bar{u}\bar{u}} > 0, \ \ F_{\bar{u}}<0\quad \text{for}\quad u\in [0, m],
$$
we have the following result.
\begin{theorem} $($\cite{LL15}$)$ Consider \eqref{1main} with \eqref{ubar} under assumptions  ($H_1$)-($H_2$).  If $u_0 \in H^2$ and $0  \leq u_0 (x) \leq m $ for all $x \in \mathbb{R}$, then there exists a non-increasing function
$\lambda(\cdot)$ such that if
$$\sup_{x \in \mathbb{R}} [u' _{0}(x)] > \lambda(\inf_{x \in \mathbb{R}} [u' _{0}(x)] ),$$
then $u_x$ must blow up at some finite time.
\end{theorem}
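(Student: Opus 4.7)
The plan is to adapt the Seliger and Constantin-Escher framework to the nonlocal flux $F(u,\bar u)$. Applying $\partial_x$ to \eqref{1main} and writing $v=u_x$ gives
\[
v_t + F_u\,v_x = -F_{uu}\,v^{2} - 2F_{u\bar u}\,v\,\bar u_x - F_{\bar u\bar u}\,\bar u_x^{\,2} - F_{\bar u}\,\bar u_{xx}.
\]
Local well-posedness from $u_0\in H^2$ supplies a $C^1$ solution on a maximal interval $[0,T_\ast)$ on which the extrema $m_1(t),m_2(t)$ of \eqref{1m12} are attained at some $\xi_i(t)$ with $u_{xx}(\xi_i,t)=0$. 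Evaluating the displayed identity at $x=\xi_i(t)$ and reading the left-hand side as the derivative along the characteristic $\dot x=F_u$, one obtains Riccati-type equations
\[
\frac{D}{Dt} m_i = -F_{uu}\,m_i^{\,2} - 2F_{u\bar u}\,m_i\,\bar u_x(\xi_i,t) - F_{\bar u\bar u}\,\bar u_x(\xi_i,t)^{2} - F_{\bar u}\,\bar u_{xx}(\xi_i,t), \quad i=1,2.
\]

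From $F(0,\cdot)=F(m,\cdot)=0$ and the smoothness of $F$ one first establishes the invariant region $0\le u(t,\cdot)\le m$ by a characteristic maximum-principle argument, so that by $(H_2)$ the leading coefficient satisfies $-F_{uu}(u,\bar u)\ge c_0>0$ uniformly; since $F_{uu}<0$, the blow-up mechanism here is $m_2(t)\to+\infty$ in finite time. The nonlocal quantities $\bar u_x(\xi_i,t)$ and $\bar u_{xx}(\xi_i,t)$ are then controlled using $(H_1)$: integration by parts in the convolution, together with the one-sided support and monotonicity of $K$ and with $0\le u\le m$, yield pointwise estimates of the form
\[
|\bar u_x(\xi_i,t)|\le C, \qquad \bar u_{xx}(\xi_i,t) \ge -a_1\, m_2(t) - a_2\, |m_1(t)| - a_3,
\]
with constants depending on $\|K\|_{W^{1,1}}$, $K(0^-)$, and $m$. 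Substituting these into the Riccati identities produces a coupled differential inequality system
\[
\dot m_2\ge c_0\, m_2^{\,2} - P_1(m_1)\, m_2 - P_2(m_1), \qquad \dot m_1\ge c_0\, m_1^{\,2} - Q(m_1),
\]
where $P_1,P_2,Q$ are polynomial in $m_1$.

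The threshold $\lambda$ is then extracted by comparison. The second inequality, since its right-hand side depends only on $m_1$, yields a time-uniform a priori lower bound $m_1(t)\ge \mu(m_1(0))$ with $\mu$ nonincreasing. Feeding this into the first inequality converts it into a one-variable Riccati inequality whose right-hand side is bounded below by $\tfrac{c_0}{2}\, m_2^{\,2}$ as soon as $m_2(0)$ exceeds a certain nonincreasing function $\lambda(m_1(0))$, forcing $m_2(t)\to+\infty$ in finite time and proving the theorem. The principal obstacle is that the extremal curves $\xi_1(t),\xi_2(t)$ need not be smooth, so a naive chain rule is unavailable. As in Constantin-Escher \cite{CE98}, this is repaired by showing that $m_1$ and $m_2$ are locally Lipschitz functions of $t$ (hence differentiable a.e.) and that the Riccati identity holds almost everywhere for any measurable selection $\xi_i(t)$ from the argmin/argmax sets, which suffices to run the blow-up comparison.
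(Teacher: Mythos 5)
Your proposal is correct in outline and follows the same Seliger/Constantin--Escher framework that the paper attributes to \cite{LL15}: differentiate the equation, evaluate the resulting Riccati identity along the (a.e.\ differentiable) extremal curves of $u_x$, use the invariant region $0\le u\le m$ (which, as you note, follows from $F(0,\cdot)=F(m,\cdot)=F_{\bar u}(0,\cdot)=F_{\bar u}(m,\cdot)=0$) to get $-F_{uu}\ge c_0>0$ by compactness, and bound the nonlocal terms via $(H_1)$. The one place you genuinely deviate is the decoupling of the two-point system. The paper's own proof of the analogous Theorem 3.2 (Section 4), in the spirit of \cite{CE98}, first shows that a linear combination $m_1+m_2$ stays on one side of a constant determined by $K(0)$ and the leading coefficient, and then substitutes that invariant into the equation for the blowing-up extremum; you instead observe that the differential inequality for $m_1$ closes in $m_1$ alone (because the lower bound for $\bar u_{xx}$ under $(H_1)$ involves only $\inf u_x$), extract a time-uniform lower bound $m_1(t)\ge\mu(m_1(0))$, and feed it into the $m_2$-inequality. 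Both routes are legitimate comparison arguments; the invariant-sum trick yields the cleaner explicit threshold of \cite{CE98}-type (a condition on $m_1(0)+m_2(0)$), while your a priori bound on $m_1$ is more robust to the extra coupling terms $2F_{u\bar u}m_i\bar u_x$ and $F_{\bar u\bar u}\bar u_x^2$ that are absent in the Whitham case, at the price of a cruder $\lambda$. Two small technical cautions: for $u_0\in H^2$ you cannot assert $u_{xx}(\xi_i(t),t)=0$ pointwise (the Constantin--Escher a.e.\ differentiability lemma, which you invoke at the end, is what actually removes the transport term), and when you claim $P_1,P_2$ are controlled by the lower bound on $m_1$ you should also use $m_1\le 0$ (automatic for $H^2$ data) so that $|m_1(t)|$ is genuinely bounded.
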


In our proof \cite{LL15}, the condition $F_{uu} <0$ in ($H_2$) is crucial. That is, the global flux function $F(u, \bar{u})$ must be either concave up only or concave down only during a solution's  life span. Essentially the same stipulation is inherent for the models in \cite{CE98, MLQ16} and many blow-up analysis that use a Riccati-type equation e.g., \cite{LL01, L17, TT14}. This is because $F_{uu}$  serve as the leading coefficient function of the Riccati type equation and  sign changes of $F_{uu}$ flip over the phase diagram of $d:=u_x$ (setting aside other factors) which governed by
$$\frac{D}{Dt} d(t) =  -F_{uu}(t) d^2 (t) +\cdots.$$

If the global flux function changes its concavity over time, like some models in next sections, then one is facing the following system after evaluating the above equation at $\xi_i (t)$:
\begin{equation}\label{1gm}
\frac{D}{Dt}m_i (t) = g_i (t) m^2 _i (t)  + \cdots, \ \  i=1,2,
\end{equation}
where $m_i$ defined in \eqref{1m12} and $g_i (t)$ is the evaluation of $-F_{uu}$ at $\xi_i (t)$. Here $g_{i} (t)$ are allowed to change their signs as $t$ evolves(due to the flux's concavity changing).

The system in \eqref{1gm} structurally generalize the system \eqref{1m}, which has $-1$ as its dominating coefficient. There has been many efforts to generalize/calibrate a kernel  $K$ which describes the dispersion relation of water waves, and changing kernel is corresponds to the changes of non-dominating coefficient functions in  \eqref{1m} or \eqref{1gm}.  In spite of the fact that \eqref{1gm} displays the dynamics of a flow gradient for rich class of non-local conservation laws \eqref{1main},  little or no attention has been made to this generalization of dominating coefficients in \eqref{1gm}. 

In the present work, we investigate the blow-up condition of the system \eqref{1gm}, which in turn identifies the wave breaking phenomena of \eqref{1main} with concavity changing $F$. As an application of our method, 
we discuss the wave breaking of the following Whitham-type equation  (13.131) in \cite{GW74}:
$$u_t + \big{(} 3\sqrt{g(h_0 +u)} -2 \sqrt{gh_0}  \big{)}u_x + \int_{\mathbb{R}} K(x-y)u_y (t, y) \, dy =0.$$
In the next section, the significances of the above models are discussed along with the recent studies on similar models. We want to point out that our argument work equally well with not only the above equations, but also a large class of non-local conservation law in \eqref{1main}, as well as many generalized kernels.

The remainder of the paper is organized as follows. In Section 2, we introduce the goal of this paper. In Section 3, we bring the ``original"  Whitham-type equation from \cite{GW74}, which has nonlinear and more physically relevant wave drifting term. The differences between the Whitham-type equation and this ``original" Whitham-type equation are discussed, and challenges in blow-up analysis on this model is introduced. Then the wave-breaking condition of the model is identified.  Finally, Sections  4 is devoted to provided the detailed proof of theorem in Sections 3.

We want to emphasize the method and not the technicalities. For this reason we focus on the cases of  simple kernels.  it is not hard to see then how the method applies to the more complicated kernel cases handled in e.g., \cite{MLQ16, L-P, LL15}.

\section{Hightlights of this paper}
In this section, we briefly discuss our methods of the blow-up analysis for a class of non-local conservation laws, and the goal of this paper.
\begin{equation}\label{mmain}
\left\{
  \begin{array}{ll}
    \partial_t u + \partial_x F(u, \bar{u}) =0, & t>0, x \in \mathbb{R}, \\
    u(0,x)=u_0 (x), &   x\in \mathbb{R},\hbox{}
  \end{array}
\right.
\end{equation}
along with application to the various models.

Let $d=u_x$, then applying $\partial_x$ to the first equation of \eqref{mmain} leads 
\begin{equation}\label{key_R}
\dot{d}:=(\partial_t + F_1 \partial_x)d = - [F_{11} d^2 +2F_{12} \bar{u}_x d + F_{22}\bar{u}^2 _x + F_{2} \bar{u}_{xx}].
\end{equation}
We trace the dynamics of $F_{11}$ along the characteristic. Since
\begin{equation*}
\begin{split}
\partial_t F_{11} &= F_{111} u_t + F_{112} \bar{u}_t\\
&=F_{111} \cdot (-F_1 u_x -F_2 \bar{u}_x) + F_{112} \bar{u}_t,
\end{split}
\end{equation*}
and
$$F_{1}\partial_x F_{11} = F_{1} (F_{111} u_x + F_{112}\bar{u}_x),$$
it follows that
\begin{equation}
\dot{F}_{11} :=(\partial_t + F_1 \partial_x) F_{11}=(-F_{111} F_2 + F_1 F_{112}  )\bar{u}_x + F_{112} \bar{u}_t.
\end{equation}
Here $F_{i}$ denotes the derivative of $F(u, \bar{u})$ with respect to its $i$-th component. 

We should point out that $u_x$ terms are cancelled out in $\dot{F}_{11}$. That is, the rate of change of $F_{11}$ along the characteristic does not carry the rate of change of $u$. This enable us to perform the blow-up analysis on the Riccati-type ODE in \eqref{key_R} even though the leading coefficient function $F_{11}$ can change its sign. 

In many cases $\dot{F}_{11}$ has either uniform constant bounds or bounds that depend on $u$ only.  We first list some examples that $\dot{F}_{11}$ is bounded by some constant:\\
$\bullet$ The hyperbolic Keller-Segel model with logistic sensitivity \cite{DS05}
corresponds to \eqref{mmain} with $F(u, \bar{u})=u(1-u)\bar{u}$ and $K(r)=\partial_r (e^{-|r|}/2)$ has
$$\dot{F}_{11}\geq -\frac{3}{4},$$
where $F_{11}=-2\bar{u}$ can change its sign. The detailed derivation of the bound and blow-up result can be found in \cite{LL15-1}.\\
$\bullet$ A traffic flow model with Arrhenius look-ahead dynamics \cite{SK06} corresponds to \eqref{mmain} with $F(u, \bar{u})=u(1-u)e^{-\bar{u}}$ and 
\begin{equation}\label{traffic_vanilla_kernel_2}
K(r)=
\left\{
  \begin{array}{ll}
K_0 / \gamma, & $if$ -\gamma \leq r \leq 0, \\
0, &   $otherwise$.\hbox{}
  \end{array}
\right.
\end{equation}
In this model, $F_{11} = -2 e^{-\bar{u}}$ does not change its sign. The blow-up results of this model can be found in \cite{LL11, LL15}. One may obtain a sharper blow-up condition because
$$\dot{F}_{11} = e^{-\bar{u}} \{ 2(1-2u)\bar{u}_x e^{-\bar{u}} +2 \bar{u}_t \}$$
and all terms herein have some uniform bounds inherited from \emph{a priori} bounds $0\leq u \leq 1.$ 
$\bullet$ A traffic flow model with a nonlocal-concave-\emph{convex} flux \cite{L-P} corresponds to \eqref{mmain} with 
$$F(u, \bar{u})=u(1-u)^2 e^{-\bar{u}}$$ and the kernel in \eqref{traffic_vanilla_kernel_2}.
In this model, $F_{11} = (-4 + 6u)e^{-\bar{u}}$ can change its sign. However, since there exists a constant $\alpha$ such that
$$|\dot{F}_{11} | \leq \alpha,$$
one can obtain the blow-up condition of $u_x$ \cite{L-P}.

For the above models, our blow-up analysis argument can be summarized as follows. Consider the Riccati type ODE in \eqref{key_R}, i.e.,
\begin{equation}\label{key_R_1}
\dot{d} =  -F_{11} (d - R_1)(d - R_2),
\end{equation}
where $R_{1} \leq R_{2}$ are the roots of the quadratic equation in \eqref{key_R}. Since there exists $\alpha >0$ such that $|\dot{F}_{11}| \leq \alpha$, one can find $t^* >0$ such that $F_{11}$ does not change its sign for $t\in [0, t^*]$. Using \eqref{key_R_1}, we then find the set of initial data that lead to the blow-up of $d$ \emph{before} $t^*$. Since $R_1$ and $R_2$ in \eqref{key_R_1} involve some \emph{global} terms, in order to estimate these, one need to use Seliger's argument, i.e., tracing the dynamics of $\min{u_x}$ and $\max{u_x}$.

One may think an extension of this method by considering the case such that
$$|\dot{F}_{11} | \leq h(u)$$ 
for some function $h(\cdot)$ and \emph{no} uniform bound of $u$ is known.
More precisely, consider a class of nonlocal dispersive equations of the form e.g., \cite{Liu0}
$$u_t + auu_x + [Q*B(u, u_x)]_x =0.$$
This equation includes many shallow water models e.g., Camassa-Holm equation, Whitham equation, and Korteweg-de Vries equation. Considering the above nonlocal dispersive equation with some \emph{nonlinear drift}
$$u_t + f(u)u_x + [Q*B(u, u_x)]_x =0,$$
it follows that
$$\dot{d} = -f'(u)d^2 + \cdots.$$
We shall consider the cases such that $f'(u)$ changes its sign and there is no uniform bound of $u$. 
The goal of this paper is to establish a quite representative sample of methods that accomplish wave-breaking for a large class of non-local conservation laws \eqref{1main} equipped with \emph{concavity changing flux}.

\section{Breaking of waves for Whitham-type equations with non-linear drift}

The surface wave model that motivated this study is the Whitham type equation:
\begin{equation}\label{WTlinear}
\eta_t + \frac{3c_0}{2h_0}\eta \eta_x + [ K*\eta ]_x =0.
\end{equation}
The function $\eta(t,x)$ models the deflection of the fluid surface from the rest position. Here, $h_0$ denotes the undisturbed fluid depth, and $c_0 := \sqrt{g h_0}$ denotes the limiting long-wave speed where $g$ is the gravitational acceleration constant. The equation was proposed by Whitham  as an alternative to the KdV equation for the description of wave motion at the surface of a perfect fluid by simplified evolution equations. Whitham emphasized that the breaking phenomena(bounded solutions with unbounded derivatives) is one of the most intriguing long-standing problems of water wave theory, and since the KdV equation can't describe breaking, he suggested \eqref{WTlinear} with the singular kernel.

We want to point out that the linear drift term $\frac{3c_0}{2h_0}\eta$ is actually a result of \emph{linear approximation} for the nonlinear wave propagating speed. Indeed, in the nonlinear shallow water equations
$$\eta_t + h_0 u_x +(\eta u)_x =0, \ \ u_t + uu_x + g \eta_x =0,$$
a waves propagating to the right into undisturbed water of depth $h_0$ satisfies the Riemann invariant
$$u= 3\sqrt{g(h_0 +\eta)} -2 \sqrt{gh_0}.$$
With this consideration, Whitham derived the equation (13.131) in \cite{GW74}:
\begin{equation}\label{WTnonlinear}
\left\{
  \begin{array}{ll}
     \eta_t + \big{(} 3\sqrt{g(h_0 +\eta)} -2 \sqrt{gh_0}  \big{)}\eta_x + \int_{\mathbb{R}} K(x-y)\eta_y (t, y) \, dy =0, & t>0, x \in \mathbb{R}, \\
    \eta(0,x)=\eta_0 (x), &   x\in \mathbb{R}.\hbox{}
  \end{array}
\right.
\end{equation}

The main contribution of this section is to show the wave breaking of  the ``original" Whitham type equation \eqref{WTnonlinear} provided that $K$ be bounded and integrable, among other hypotheses.

In our study we set $g=h_0=1$, since these constants are not essential in our blow-up analysis. Also, for our notational convenience we use $u(t,x)$ as our unknown function. That is, we will consider
\begin{equation}\label{WTnonlinearf}
\left\{
  \begin{array}{ll}
     u_t + f(u) u_x + \int_{\mathbb{R}} K(x-y)u_y (t, y) \, dy =0, & t>0, x \in \mathbb{R}, \\
    u(0,x)=u_0 (x), &   x\in \mathbb{R}, \hbox{}
  \end{array}
\right.
\end{equation}
where $f(u)=3\sqrt{1+u} -2$.

We note that even though the reverted Whitham equation \eqref{WTnonlinearf} looks similar to the model in \eqref{WTlinear}, they are quite different in the perspectives of modeling and blow-up analysis. Let $d=\eta_x$ and apply $\partial_x$ to \eqref{WTlinear}, it follows that
\begin{equation}\label{Roriginal}
\frac{D}{Dt} d  = -\frac{3c_0}{2h_0} d^2 + \cdots,
\end{equation}
where $\frac{D}{Dt}:= (\partial_t  + \frac{3c_0 }{2h_0} \eta \partial_x)$ is the convective derivative operator. On the other hand, letting $d=u_x$ and applying $\partial_x$ to \eqref{WTnonlinearf} gives
\begin{equation}\label{Rnonlinear}
\frac{D}{Dt} d = -f'(u) d^2 + \cdots,
\end{equation}
where the convective derivative operator is defined similarly and 
$$-f'(u)=-\frac{3}{2} \frac{1}{\sqrt{1+u}}.$$

We first address the difference in modeling aspect.  In \eqref{Roriginal}, the wave breaking can occur at \emph{any height} of the wave as long as $d=u_x$ initially satisfies the blow up criteria in \cite{CE98}. In contrast to this, the leading coefficient function in \eqref{Rnonlinear} is taking into account $u$; the fluid deflection. Therefore, during the course of the proof we anticipate some \emph{balance} between wave height and its gradient that leads to the wave breaking. It is not clear which one is more physically relevant, but in \eqref{Rnonlinear} the fluid wave's height plays an important role for the wave breaking whereas the height in \eqref{Roriginal} plays no role for the wave breaking. 

In the blow up threshold analysis aspect, the model in \eqref{Rnonlinear} can be viewed as a generalization of \eqref{Roriginal}. There has been numerous effort focused on showing the wave breaking with generalized and refined Kernel $K$. Some recent works in this direction can be found in  \cite{MLQ16, H17} and reference therein. However, little or no attention has been made to the structural generalization given in \eqref{Rnonlinear}. 
Heuristically, the magnitudes of the leading coefficient functions in the Riccati equations \eqref{Roriginal} and \eqref{Rnonlinear} determine how wide the quadratic functions' graphs are. Moreover, the roots of the quadratic function will be phase switching locations of the Riccati differential equation phase diagram. For the traffic flow model with look-ahead dynamics \cite{LL15, LL11, L-P}, even though the Riccati equation has a relatively simple leading coefficient,  the roots are changing their locations in time, and generic non-locality makes estimation of these roots difficult. In \eqref{Rnonlinear}, the roots of the quadratic equation are changing over time due to $-f'(u)$ as well as the non-locality. 

If we assume that there exists a conserved quantity, for example $\|u(t) \|_{H^1}=\|u_0 \|_{H^1}$ for \eqref{WTnonlinearf} like the Camassa-Holm equation, then since
\begin{equation*}
\begin{split}
u^2 (t,x) &= \int^x _{\infty} uu_x \, dy - \int^{\infty} _x uu_x \,dy \\
&< \frac{1}{2} \int^x _{-\infty} u^2 + u^2 _x \, dy +  \frac{1}{2} \int^{\infty} _{x} u^2 + u^2 _x \, dy= \frac{1}{2}\|u(t) \|^2 _{H^1},
\end{split}
\end{equation*}
one may rewrite \eqref{Rnonlinear} as
$$\frac{D}{Dt} d < -\frac{3}{2}\frac{1}{\sqrt{1 + \|u_0 \|_{H^1} /\sqrt{2} }}d^2 +\cdots$$
and can simply obtain wave breaking result using the argument in \cite{CE98}.  However, our proof is based on estimating the rate of change of $-f'(u)$ along the same characteristic, not the non-sharp and ``may not exist" inequality
$$|u(t) | \leq \frac{ \|u_0 \|_{H^1} }{\sqrt{2}},$$
 that may lead to some non-sharp blow-up condition. Moreover, our argument work equally well with any smooth function $f(u)$ and many generalized or singular kernels $K$ in \cite{MLQ16, H17}.

To state our main result, we introduce several quantities with which we characterize the initial behavior of  profile $u(t, x)$. These are
$$m_1 (0) := \inf_{x\in \mathbb{R}} [u_x (0,x)], \ \ \ \ m_2 (0) := \sup_{x\in \mathbb{R}} [u_x (0, x)],$$
and
$$\xi_1 (0) :=\mathrm{argmin}[u' _0 (x)], \ \ \ \ \xi_2 (0):=\mathrm{argmax}[u' _0 (x)],$$
i.e., points for which $u' _0 (x)$ attains its maximum and minimum, respectively, and
$$u^M _0 := \max_{i=1,2}[u(0, \xi_i (0))], \ \ \ \ u^m _0 := \min_{i=1,2}[u(0, \xi_i (0))].$$

\begin{theorem}\label{thm_wt}
Let $u \in C^\infty ([0, T); H^{\infty}(\mathbb{R}))$ be a solution of \eqref{WTnonlinearf} before breaking. Assume $K(x)$ in \eqref{WTnonlinearf} is regular(smooth and integrable over $\mathbb{R}$) and symmetric and monotonically decreasing on $R^{+}$. If there exists a constant $\mu$ such that $-\frac{3}{2} \frac{1}{\sqrt{1+u^M _0}} \leq \mu <0$ and
\begin{equation}\label{YKcon}
\inf_{x\in \mathbb{R}} [ u' _0(x)] \leq \min \bigg{[} \frac{2K(0)}{\mu} -  \sup_{x\in \mathbb{R}} [ u' _0(x)] ,  \frac{K(0)}{\mu} + E\big{(}\mu, \kappa, u^m _0, u^M _0 \big{)} \bigg{]},
\end{equation}
then $u_x$ must blow-up at some finite time. 
Here, 
$$E\big{(}\mu, \kappa, u^m , u^M \big{)}:=\frac{1}{\mu} \bigg{(}\frac{3}{4} \frac{\| K' \|_{L^{\infty}} \| u_0 \|_{L^1}}{(\kappa(1+ u^m ))^{3/2}} \bigg{)} \bigg{(}\mu + \frac{3}{2}\frac{1}{\sqrt{1+u^M }}\bigg{)}^{-1} ,$$
and
$$\kappa(\mu, u^m, u^M):=\bigg{(} \frac{4}{3}\mu \sqrt{1+ u^m } + 2\frac{\sqrt{1+u^m }}{\sqrt{1 + u^M}} +1 \bigg{)}^{-1}.$$
\end{theorem}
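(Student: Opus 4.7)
The plan is to execute the Seliger/Constantin--Escher scheme for the nonlinear-drift equation \eqref{WTnonlinearf}, treating the variable leading coefficient $-f'(u)$ in the spirit of the Highlights section. First, differentiating \eqref{WTnonlinearf} in $x$ and using one integration by parts on the convolution (the smoothness of $K$ and the spatial decay of $u$ make this legitimate) produces $(K*u)_{xx}=K'*u_x$. Evaluating the resulting second-order equation at the extremal points $\xi_1(t),\xi_2(t)$ of $u_x(t,\cdot)$, where $u_{xx}$ vanishes, yields the coupled system
\[
\dot m_i(t) \;=\; -\,f'\!\bigl(u(t,\xi_i(t))\bigr)\,m_i^2(t) \;-\; (K'*u_x)(t,\xi_i(t)), \qquad i=1,2.
\]
As is standard, the formal differentiation of the $\min$ and $\max$ functionals is justified via the Constantin--Escher lemma and holds almost everywhere in $t$ up to the breaking time.

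I would next control the two sources of nonlocality. Splitting the integral at $\xi_i$ and using the symmetry and monotonicity of $K$, the identities $\int_{-\infty}^{\xi_i}K'(\xi_i-y)\,dy=-K(0)$ and $\int_{\xi_i}^{\infty}K'(\xi_i-y)\,dy=K(0)$, combined with the bracketing $m_1\le u_x\le m_2$, give $|(K'*u_x)(\xi_i)|\le K(0)(m_2-m_1)$. To tame the time-dependent leading coefficient, I would follow $f'(u)$ along the characteristic $\dot\eta=f(u)$. The PDE gives $\tfrac{D}{Dt}u=-K'*u$ (by one integration by parts in the convolution), so
\[
\tfrac{D}{Dt}f'(u)\;=\;f''(u)\,\tfrac{D}{Dt}u\;=\;\tfrac{3}{4}(1+u)^{-3/2}\,(K'*u),
\]
which is bounded in magnitude by $\tfrac{3}{4}\|K'\|_{L^\infty}\|u\|_{L^1}(1+u)^{-3/2}$, with $\|u\|_{L^1}$ controlled by the conservation structure of \eqref{WTnonlinearf}. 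A Gronwall/comparison argument then produces a time window $[0,t^\ast]$ on which $-f'(u(\xi_i(t)))\le\mu$ and on which $1+u(t,\xi_i(t))$ stays above $\kappa(1+u_0^m)$; these are precisely the two quantities $\mu$ and $\kappa$ that appear in \eqref{YKcon}.

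With these preparations the system on $[0,t^\ast]$ reduces to
\[
\dot m_1 \;\le\; \mu\,m_1^2 + K(0)(m_2-m_1), \qquad \dot m_2 \;\ge\; \mu\,m_2^2 - K(0)(m_2-m_1),
\]
with $\mu<0$ fixed. Setting $A:=m_1+m_2$, $B:=m_2-m_1\ge 0$, summing the two inequalities, using $m_1^2+m_2^2=\tfrac12(A^2+B^2)$, and completing the square in $B$ produces the scalar Riccati inequality $\dot A\le \tfrac{\mu}{2}A^2-\tfrac{2K(0)^2}{\mu}$. The first half of \eqref{YKcon}, $m_1(0)+m_2(0)\le 2K(0)/\mu$, is exactly the initial threshold that makes this right-hand side negative at $t=0$; a standard Riccati comparison then forces $A\to-\infty$ in finite time. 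The second inequality in \eqref{YKcon}, involving $E(\mu,\kappa,u_0^m,u_0^M)$, I would extract at this stage as the quantitative requirement that the blow-up time of $A$ (and hence of $m_1$, since a companion Riccati estimate keeps $m_2$ bounded above) falls strictly inside $[0,t^\ast]$, so that the sign control $-f'(u)\le\mu$ persists all the way up to the singularity.

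The hardest step, and the genuinely novel one compared with the classical constant-coefficient arguments of \cite{CE98,LL15}, is the simultaneous control of the three coupled quantities $m_1$, $m_2$, and $u$ along the characteristics: the blow-up rate itself depends on $u$ through $f'(u)=\tfrac{3}{2}(1+u)^{-1/2}$, while $u$ in turn is convected nonlocally. Producing a window $[0,t^\ast]$ which is simultaneously long enough for the Riccati mechanism to fire and short enough to preserve $-f'(u)\le\mu$ is the delicate balancing act, and it is precisely this balancing that forces the appearance of the constants $\kappa$ and $E$ in the second threshold of \eqref{YKcon}.
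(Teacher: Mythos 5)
Your overall architecture coincides with the paper's: differentiate to get a Riccati equation with variable leading coefficient $-f'(u)$, track $\tfrac{D}{Dt}f'(u)$ along characteristics (observing the cancellation of the $u_x$ terms), combine this with the lower bound $u\ge -Mt+u(0)$ to manufacture a window $[0,t^*]$ on which $-f'(u(t,\xi_i(t)))\le\mu<0$ and $1+u$ stays above $\kappa(1+u_0^m)$, and then run a Constantin--Escher argument requiring blow-up before $t^*$. That is exactly the paper's proof, including the origin of $\mu$ and $\kappa$.

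There are, however, two genuine problems in your Riccati closure. First, the direction of your second differential inequality is wrong: you write $\dot m_2\ge \mu m_2^2-K(0)(m_2-m_1)$ and then ``sum the two inequalities,'' but adding an upper bound for $\dot m_1$ to a \emph{lower} bound for $\dot m_2$ gives no information about $\dot A=\dot m_1+\dot m_2$. The correct statement (and the one the paper derives, following \cite{CE98}) is that \emph{both} extrema satisfy the same upper bound $\dot m_i\le \mu m_i^2+K(0)(m_2-m_1)$, $i=1,2$. Second, even after fixing the sign, your closure is different from the paper's and does not deliver the stated theorem. Completing the square in $B=m_2-m_1$ yields $\dot A\le\tfrac{\mu}{2}A^2-\tfrac{2K(0)^2}{\mu}$, whose blow-up criterion and blow-up time are conditions on $A(0)=m_1(0)+m_2(0)$, with a logarithmic (not algebraic) time formula; matching that against $t^*$ would produce a threshold of a different form than the second entry of \eqref{YKcon}. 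The paper instead uses the summed inequality only to show that the set $\{m_1+m_2\le 2K(0)/\mu\}$ is forward invariant, then feeds $m_2-m_1\le \tfrac{2K(0)}{\mu}-2m_1$ back into the $m_1$ inequality alone to obtain $\tfrac{D}{Dt}m\le\mu m^2$ for $m=m_1-K(0)/\mu$, giving the clean blow-up time $1/(m(0)\mu)$; the explicit function $E$ is precisely what makes $1/(m(0)\mu)\le t^*$, and the specific choice of $\accentset{\sim}{M}$ (equivalently $\kappa$, via the elementary Lemma on $\tfrac34(\kappa A)^{-3/2}>B/(A-A\kappa)$) is what guarantees $t^*<t^{**}$. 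These two quantitative verifications are the substance of the second condition in \eqref{YKcon}, and your proposal leaves them as ``to be extracted''; as sketched, your route would prove a variant theorem with a different threshold rather than Theorem \ref{thm_wt} as stated.
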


Some remarks are in order regarding this result.\\
i) The wave breaking condition of Whitham's equation in \cite{CE98} is
\begin{equation}\label{WTcon}
\inf_{x\in \mathbb{R}} [ u' _0(x)] +\sup_{x\in \mathbb{R}} [ u' _0(x)] <-2K(0). 
\end{equation}
 Besides these lowest/highest initial slopes, the wave breaking for the \emph{reverted} Whitham's equation depends on several initial quantities including the initial values of  $$-f'(u(t,x))=-\frac{3}{2} \frac{1}{\sqrt{1+u(t,x)}}$$ evaluated at $\xi_i (0)$. This is natural because the key in our proof is tracing the dynamics of $-f'(u(t,x))$(along the characteristic), which serve as the leading coefficient of the Riccati equation.\\
ii) The conditions in \eqref{YKcon} and \eqref{WTcon}  are quite similar; the lowest/highest initial slopes play the major role in the wave-breaking conditions. In addition to this, in \eqref{YKcon} one can observe \emph{some balance} between these slopes and the values of the fluid deflection $u$ evaluated at $\xi_i$. Indeed, the negative function $E$ in the right hand side of \eqref{YKcon} is decreasing in $u^M$ and increasing in $u^m$. This may be interpreted as, even if the lowest initial slope is sufficiently low, the wave breaking may not occur if the lowest initial slope is attained at too high or too low wave height. In contrast to this, the wave breaking condition for Whitham equation in \cite{CE98} showes that the wave breaking can occur at \emph{any height} as long as the initial slope satisfies the conditions in \eqref{WTcon}.\\
iii) In the proof of Theorem \ref{thm_wt}, we want to emphasize the method and not the technicalities. For this reason we considered the simplest kernel handled in \cite{CE98}. But our argument in the proof works equally well for the completed kernels handled in \cite{MLQ16} and generic smooth function $f(u)$.   Also,  the constants in the blowup condition could be optimized by more delicate estimate.\\

\section{Proof of Theorem \ref{thm_wt}}


In this section, we turn now to the proof of Theorem \ref{thm_wt}. We consider
\begin{equation}\label{WTP}
\left\{
  \begin{array}{ll}
     u_t + f(u)u_x + \int_{\mathbb{R}} K(x-y)u_y (t, y) \, dy =0, & t>0, x \in \mathbb{R}, \\
    u(0,x)=u_0 (x), &   x\in \mathbb{R}.\hbox{}
  \end{array}
\right.
\end{equation}
with $$f(u):=3\sqrt{1+u} -2.$$

Let $d=u_x$ and applying $\partial_x$ to the first equation of \eqref{WTP}, we obtain
\begin{equation}\label{3deqn}
\begin{split}
\frac{D}{Dt}d&:=(\partial_t + f(u)\partial_x)d\\
&=-f'(u)d^2 -\int_{\mathbb{R}} K(z)u_{xx}(t, x-z) \, dz.
\end{split}
\end{equation}
We then trace the evolution of $-f'(u)$ along the same characteristic. That is, we estimate
$$\frac{D}{Dt}(- f'(u))= (\partial_t + f(u)\partial_x )(-f'(u)).$$
\begin{lemma}\label{lemma3-1}
For $t>0$, it holds,
$$-\frac{3}{4} \frac{M}{(1+u)^{3/2}} \leq \frac{D}{Dt} (-f'(u)) \leq \frac{3}{4} \frac{M}{(1+u)^{3/2}},$$
where $M:=\|K' \|_{L^\infty} \|u_0 \|_{L^1}$.
\end{lemma}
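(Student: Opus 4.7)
The proof plan is to exploit the key structural observation that $-f'(u) = -\tfrac{3}{2}(1+u)^{-1/2}$ depends only on $u$, so its material derivative along the characteristics of $\partial_t + f(u)\partial_x$ reduces to $\frac{Du}{Dt}$ multiplied by an explicit scalar factor. Concretely, by the chain rule,
\begin{equation*}
\frac{D}{Dt}(-f'(u)) = -f''(u)\,\frac{Du}{Dt} = -f''(u)\bigl(u_t + f(u) u_x\bigr),
\end{equation*}
and since $f''(u) = -\tfrac{3}{4}(1+u)^{-3/2}$, this rewrites as
\begin{equation*}
\frac{D}{Dt}(-f'(u)) = \frac{3}{4}\,(1+u)^{-3/2}\bigl(u_t + f(u)u_x\bigr).
\end{equation*}
So the entire task reduces to producing a uniform bound on $u_t + f(u)u_x$ of the form $|u_t + f(u)u_x|\leq M$ with $M = \|K'\|_{L^{\infty}}\|u_0\|_{L^1}$.

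For that bound I would invoke the PDE \eqref{WTP} directly, which gives $u_t + f(u)u_x = -\int_{\mathbb{R}} K(x-y) u_y(t,y)\,dy$. Since the solution lies in $H^\infty(\mathbb{R})$, in particular it decays at infinity, so integrating by parts in $y$ (with $\partial_y K(x-y) = -K'(x-y)$) transfers the derivative onto the kernel and kills the boundary term:
\begin{equation*}
u_t + f(u)u_x = -\int_{\mathbb{R}} K'(x-y)\,u(t,y)\,dy.
\end{equation*}
This is the crucial step: shifting the derivative off of $u$ and onto $K$ replaces an object with no \emph{a priori} uniform bound (we do not control $\|u_x\|_{L^\infty}$, which is precisely the quantity we are trying to blow up) by a product that can be estimated in terms of quantities preserved by the dynamics.

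From here Hölder's inequality gives $|u_t + f(u)u_x| \leq \|K'\|_{L^\infty}\|u(t)\|_{L^1}$, and inserting this into the expression above yields exactly the two-sided inequality in the statement, provided I can identify $\|u(t)\|_{L^1}$ with $\|u_0\|_{L^1}$. For this I would use the conservative form of \eqref{WTP}: integrating the equation over $\mathbb{R}$ shows that $\int u(t,x)\,dx$ is time-invariant, and under the working assumption that the solution preserves sign (or, more weakly, that $\|u(t)\|_{L^1}$ is controlled by $\|u_0\|_{L^1}$), we have $\|u(t)\|_{L^1}\leq \|u_0\|_{L^1}$, so the product $\|K'\|_{L^\infty}\|u(t)\|_{L^1}\leq M$. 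Combining the explicit factor $\tfrac{3}{4}(1+u)^{-3/2}$ with $|u_t + f(u)u_x|\leq M$ gives the claimed symmetric bound.

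The main obstacle I anticipate is precisely the $L^1$ conservation step, since \eqref{WTP} only guarantees conservation of $\int u\,dx$ rather than $\int |u|\,dx$; one must either restrict to solutions for which the two coincide, or replace the $L^1$ control by a more delicate argument (for instance, using Young's convolution inequality with $K'\in L^1$ against an $L^\infty$ bound on $u$, or interpolating through the $H^\infty$ bound provided by the hypothesis on the smoothness class of the solution). Everything else — the chain rule, the integration by parts, and the explicit form of $f''$ — is routine once this quantitative control is in place.
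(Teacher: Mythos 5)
Your proposal is correct and follows essentially the same route as the paper: apply the chain rule so that $\frac{D}{Dt}f'(u)=f''(u)\frac{Du}{Dt}$, substitute the PDE to get $\frac{Du}{Dt}=-\int_{\mathbb{R}}K(x-y)u_y\,dy$, integrate by parts to shift the derivative onto $K$, and bound by $\|K'\|_{L^\infty}\|u\|_{L^1}$. The $L^1$-conservation subtlety you flag is real but is equally left implicit in the paper's own proof (which simply writes $M=\|K'\|_{L^\infty}\|u_0\|_{L^1}$), so your treatment is, if anything, slightly more careful than the original.
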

\begin{proof}
It follows from the first equation in \eqref{WTP} that
$$\partial_t f'(u) = f''(u)u_t= f''(u)\{ -f(u)u_x  - \int_{\mathbb{R}} K(x-y) u_y (t,y) \, dy \}.$$
Since $f(u)\partial_x f'(u) = f(u)f''(u)u_x$, $\frac{D}{Dt} f'(u)=\partial_t f'(u) + f(u)\partial_x f'(u) $ is reduced to
$$\frac{D}{Dt} f'(u) = -f''(u) \int_{\mathbb{R}} K(x-y) u_y (t,y) \, dy.$$
Integration by parts and $f''(u)=-\frac{3}{4} \frac{1}{(1+u)^{3/2}}$ give the desired result.
\end{proof}
Unlike with the leading coefficient functions of several models in Section 2,  the rate of change of $-f'(u)$ along the characteristic does \emph{not} have uniform constant bounds, and depends on $u$. Tracing the evolution of $u$ using the first equation of \eqref{WTP}, along the same characteristic gives
\begin{equation}\label{3ub}
-M \leq \frac{D}{Dt} u \leq M,
\end{equation}
where $M$ defined above. Integrating \eqref{3ub} over $[0,t]$ gives
$$u\geq -M t + u(0).$$
Recalling the bounds in Lemma \ref{lemma3-1}, we get
\begin{equation}\label{3fpbound}
\frac{D}{Dt} (-f'(u)) \leq \frac{3}{4}\frac{M}{(-Mt + u(0) +1 )^{3/2}}.
\end{equation}
Note that the expression in the right hand side, is valid only when $-Mt + u(0) +1 >0$.

We now let
\begin{equation*}
\begin{split}
&m_1(t):=\inf_{x \in \mathbb{R}} [u_x(t,x)]=d(t, \xi_1(t)) \ and\\
&m_2(t):=\sup_{x \in \mathbb{R}} [u_x(t,x)]=d(t, \xi_2 (t)).\\
\end{split}
\end{equation*}
Evaluating the inequality in \eqref{3fpbound} at $x=\xi_i (t)$, $i=1,2$, we obtain
$$\frac{D}{Dt}\bigg{(} -f'\big{(}u(t, \xi_i (t)) \big{)} \bigg{)}\leq  \frac{3}{4} \frac{M}{(-Mt + u(0, \xi_i(0)) +1 )^{3/2}},   $$
or
\begin{equation}\label{3fpbound2}
\frac{D}{Dt} \bigg{(} -f'\big{(}u(t, \xi_i (t)) \big{)} \bigg{)} \leq \frac{3}{4}\frac{M}{(-M t + u^m _0 +1)^{3/2}}, \ i =1,2,
\end{equation}
where 
$$u^m _0 := \min_{i=1,2} \big{[} u(0, \xi_i (0)) \big{]}.$$
It follows that the inequality \eqref{3fpbound2} holds for $t \in [0, \frac{u^m _0 +1}{M})$.

We are now headed to find a short time \emph{negative} upper bound of $-f'(u(t, \xi_i (t)))$. We choose some 
\begin{equation}\label{3tildeM}
\accentset\sim{M} >\frac{3}{4} \frac{M}{(u^m _0 +1)^{3/2}},
\end{equation}
then \eqref{3fpbound2} gives
\begin{equation}\label{3fpbound3}
\frac{D}{Dt} \bigg{(} -f'\big{(}u(t, \xi_i (t)) \big{)} \bigg{)} \leq \frac{3}{4}\frac{M}{(-M t + u^m _0 +1)^{3/2}} < \accentset{\sim}{M},
\end{equation}
for $0\leq t< t^{**} $. Here 
\begin{equation}\label{3tss}
t^{**} = \frac{1}{M} \bigg{\{} 1+ u^m _0 -\bigg{(}\frac{3}{4} \frac{M}{\accentset\sim{M}} \bigg{)}^{2/3}   \bigg{\}}< \frac{u^m _0 +1}{M}.
\end{equation}
Integrating \eqref{3fpbound3} over $[0,t]$, we get
$$-f'(u(t, \xi_i (t))) \leq \accentset{\sim}{M} t - f'(u(0, \xi_i (0))), \ \ 0\leq t \leq t^{**}.$$
Since $-f'(u)=-\frac{3}{2} \frac{1}{\sqrt{1+u}}$, it follows that
\begin{equation}\label{3fpbound4}
-f'(u(t, \xi_i (t))) \leq \accentset{\sim}{M} t - \frac{3}{2} \frac{1}{\sqrt{1+u^M _0}}, \ \ 0\leq t \leq t^{**}, \ \ i=1,2,
\end{equation}
where 
$$u^M _0 := \max_{i=1,2} \big{[} u(0, \xi_i (0)) \big{]}.$$
Now we arrive at the key inequality; from \eqref{3fpbound4} we see that for 
$$ \mu \in \bigg{[}- \frac{3}{2} \frac{1}{\sqrt{1+u^M _0}} , 0 \bigg{)},$$
and 
\begin{equation}\label{3ts}
t^*= \frac{1}{\accentset{\sim}M}\bigg{(} \mu + \frac{3}{2} \frac{1}{\sqrt{1 + u^M _0}}  \bigg{)},
\end{equation}
$\mu$ serves as a short time negative upper bound of $-f'(u(t, \xi_i (t)))$, therefore
\begin{equation}
-f'(u(t, \xi_i (t))) \leq \mu <0, \ \ for \ \ t\in [0, t^*].
\end{equation}

Evaluating \eqref{3deqn} at $x=\xi_i (t)$, $i=1,2$, and using the above upper bound, we obtain
\begin{equation}\label{3meqn}
\frac{D}{Dt}m_i = \mu m^2 _i  -\int_{\mathbb{R}} K(z)u_{xx}(t, \xi_i (t)-z) \, dz, \ \ a.e. \ \ on \ \ (0, t^*), \ i=1,2.
\end{equation}

We are aim to find an initial profile of \eqref{WTP} that yields wave breaking. In order to do this, we seek the initial profiles that lead to the blow-up of $d=u_x$ \emph{before} $t^*$. 
That is, we shall need to i) find the blow-up condition of \eqref{3meqn}, ii) check if the blow-up occur before $t^*$. Furthermore, 
one need to verify $$t^* < t^{**},$$
with an appropriate choice of $\accentset{\sim}{M}$. From \eqref{3ts} and \eqref{3tss}, we observe that the inequality easily hold for sufficiently large $\accentset{\sim}{M}$. Later we will choose an \emph{optimally} \emph{small} $\accentset{\sim}{M}$, because larger $\accentset{\sim}{M}$ leads to smaller $t^*$, which in turn requires more extreme initial profiles for the blow-up.

Following the argument in \cite{CE98}, and from \eqref{3meqn} we infer the inequalities
\begin{equation}\label{3meqn2}
\begin{split}
&\frac{D}{Dt}m_1 \leq \mu m^2 _1 + K(0)(m_2 - m_1),  \ \ a.e. \ \ on \ \ (0, t^*),\\
&\frac{D}{Dt}m_2 \leq \mu m^2 _2 + K(0)(m_2 - m_1),  \ \ a.e. \ \ on \ \ (0, t^*).\\
\end{split}
\end{equation}
We are still following the argument in \cite{CE98}:
Summing up the two equations in \eqref{3meqn2}, we have
\begin{equation}
\begin{split}
\frac{D}{Dt} (m_1 + m_2) &\leq \mu(m^2 _1 + m^2 _2) + 2K(0)(m_2 - m_1)\\
&=(m_2 -m_1) \big{\{}2K(0) -\mu (m_1 + m_2)  \big{\}} +2\mu m^2 _2.
\end{split}
\end{equation}
If we assume that 
$$m_1(0) + m_2 (0) \leq \frac{2K(0)}{\mu},$$
we see that $m_1 (t) + m_2 (t)$ remains so for all time. Using this fact, we consider
\begin{equation}
\begin{split}
\frac{D}{Dt} m_1 &\leq \mu m^2 _1 + K(0)(m_2 - m_1)\\
&\leq \mu m^2 _1 + K(0) \bigg{(} \frac{2K(0)}{\mu} -m_1 \bigg{)} - K(0)m_1\\
&=\mu \bigg{(} m_1 - \frac{K(0)}{\mu}  \bigg{)}^2 + \frac{K^2 (0)}{\mu}.
\end{split}
\end{equation}
Defining $m(t):=m_1 (t) - K(0)/\mu$, and since $\mu <0$, we obtain
$$\frac{D}{Dt} m \leq \mu m^2, \ \ a.e. \ \ on \ \ (0, t^*).$$
Integration yields
\begin{equation}\label{3mineq}
m(t) \leq \frac{m(0)}{1-m(0)\mu t}.
\end{equation}
Therefore,
\begin{equation}
m(t)\rightarrow -\infty, \ \ as \ \ t\rightarrow \frac{1}{m(0)\mu}.
\end{equation}

The above argument can be summarized as follows:
\begin{lemma}\label{3lemma_m_b}
If $m_1(0)  \leq \frac{2K(0)}{\mu} -m_2 (0)$, then
$$m(t):=m_1 (t) -\frac{K(0)}{\mu} \rightarrow -\infty,$$
as $t \rightarrow \frac{1}{m(0)\mu}$.
\end{lemma}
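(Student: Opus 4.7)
The approach is to collapse the two coupled Riccati-type inequalities \eqref{3meqn2} into a single scalar inequality for the shifted quantity $m(t):=m_1(t)-K(0)/\mu$, and then integrate it by separation of variables. The author has laid out essentially all the ingredients in the paragraphs just before the lemma statement; my plan is simply to organize them into a clean two-step argument.

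First I would verify that the sublevel set $\{m_1+m_2\leq 2K(0)/\mu\}$ is invariant on $(0,t^*)$. Summing the two inequalities in \eqref{3meqn2} and regrouping as in the text yields
$$\frac{D}{Dt}(m_1+m_2) \leq (m_2-m_1)\bigl\{2K(0) - \mu(m_1+m_2)\bigr\} + 2\mu m_2^2.$$
On the boundary $m_1+m_2=2K(0)/\mu$ the first summand vanishes, and since $\mu<0$ and $m_2\geq m_1$ the second summand is $\leq 0$; a standard comparison with the constant super-solution $m_1+m_2\equiv 2K(0)/\mu$ then gives the invariance. Some care is needed here because $m_1,m_2$ are only Lipschitz envelopes of $u_x$ and the inequalities hold almost everywhere, but the comparison can be made rigorous using the absolute continuity argument of \cite{CE98}.

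Next I would use the bound $m_2\leq 2K(0)/\mu - m_1$ in the first line of \eqref{3meqn2} to obtain a self-contained inequality for $m_1$:
$$\frac{D}{Dt}m_1 \leq \mu m_1^2 + K(0)\bigl(2K(0)/\mu - 2m_1\bigr) = \mu\bigl(m_1 - K(0)/\mu\bigr)^2 + K(0)^2/\mu.$$
Since $\mu<0$ the constant $K(0)^2/\mu$ is nonpositive and may be dropped, producing $\frac{D}{Dt}m \leq \mu m^2$ for $m := m_1 - K(0)/\mu$. From $m_1(0)\leq m_2(0)$ together with the hypothesis $m_1(0)+m_2(0)\leq 2K(0)/\mu$ one deduces $m_1(0)\leq K(0)/\mu$, hence $m(0)\leq 0$; assuming the strict inequality for nontriviality, separation of variables gives \eqref{3mineq} and the blow-up $m(t)\to-\infty$ at $t=1/(m(0)\mu)>0$.

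The main subtlety is not inside the lemma itself, which is essentially a repackaging of the computations displayed above, but in its eventual application: the conclusion is only useful if the blow-up time $1/(m(0)\mu)$ lies inside the short-time window $(0,t^*)$ on which the bound $-f'(u(\cdot,\xi_i(\cdot)))\leq \mu$ secured through \eqref{3fpbound4} remains valid, and simultaneously $t^*<t^{**}$ for some admissible choice of $\accentset\sim M$ in \eqref{3tildeM}. Tuning $\accentset\sim M$ to make all three time scales compatible is precisely what generates the second, more intricate branch $K(0)/\mu + E(\mu,\kappa,u^m_0,u^M_0)$ of the hypothesis \eqref{YKcon}, and I would defer that optimization to the main proof of Theorem \ref{thm_wt} rather than packing it into the statement of the lemma.
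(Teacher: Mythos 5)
Your proposal is correct and follows essentially the same route as the paper: summing the two inequalities in \eqref{3meqn2} to get invariance of the sublevel set $\{m_1+m_2\leq 2K(0)/\mu\}$, substituting that bound back to obtain $\frac{D}{Dt}m\leq \mu m^2$ for $m=m_1-K(0)/\mu$, and integrating; the only cosmetic difference is that the paper gets strict negativity $m(0)\leq K(0)/\mu<0$ directly from $m_2(0)\geq 0$ rather than assuming it, and, as you anticipated, it defers the compatibility of the blow-up time with $t^*$ and $t^{**}$ to the remainder of the proof of Theorem \ref{thm_wt}.
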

\begin{proof}
Since $m_1(0)  \leq \frac{2K(0)}{\mu} -m_2 (0)$, we have $m_1(0) \leq \frac{2K(0)}{\mu}<0,$
because $m_2(0) \geq 0 $ and $\mu<0$. Furthermore, since
$m(0)=m_1(0) - \frac{K(0)}{\mu} \leq \frac{K(0)}{\mu}<0 $, from \eqref{3mineq}, we have 
$$m(t) \rightarrow -\infty$$
as $$t\rightarrow \frac{1}{m(0)\mu}.$$
\end{proof}

As noted earlier, in order to validate our argument, we need to show that the blow-up occur before $t^*$. In addition to this, it is required to prove $t^* \leq t^{**}$. These will be fulfilled by choosing an appropriate $\accentset{\sim}{M}$ in \eqref{3tildeM}. We first state and prove some elementary inequality:
\begin{lemma}\label{3lem3.2}
For any $A, B >0$, if 
$$\kappa:=\frac{\frac{3}{4}\frac{1}{\sqrt{A}}}{B+\frac{3}{4} \frac{1}{\sqrt{A}}},$$
then 
$$\frac{3}{4} \frac{1}{(\kappa A)^{3/2}} > \frac{B}{A-A\kappa}.$$
\end{lemma}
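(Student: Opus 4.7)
The plan is to reduce the claim to the trivial inequality $B>0$ by a single clever substitution. Set $C := \tfrac{3}{4}A^{-1/2}$, so that the definition of $\kappa$ takes the compact form $\kappa = C/(B+C)$. Two immediate consequences are
\[
\kappa A = \frac{AC}{B+C}, \qquad A - A\kappa = A(1-\kappa) = \frac{AB}{B+C}.
\]
These are the only identities needed; the rest is bookkeeping with exponents.

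Next I would substitute these into the target inequality. The right-hand side collapses at once to
\[
\frac{B}{A-A\kappa} = \frac{B+C}{A}.
\]
For the left-hand side, using $(AC)^{3/2} = A^{3/2}\cdot C^{3/2}$ and $C^{3/2}=(3/4)^{3/2}A^{-3/4}$, one obtains
\[
\frac{3}{4}\frac{1}{(\kappa A)^{3/2}} = \frac{3}{4}\cdot\frac{(B+C)^{3/2}}{(AC)^{3/2}} = \Bigl(\tfrac{3}{4}\Bigr)^{-1/2}\frac{(B+C)^{3/2}}{A^{3/4}}.
\]

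Finally I would compare the two expressions. Dividing the desired inequality $\text{LHS}>\text{RHS}$ through by $(B+C)/A$ and rearranging powers of $A$, the inequality reduces to
\[
(B+C)^{1/2} > \Bigl(\tfrac{3}{4}\Bigr)^{1/2}A^{-1/4},
\]
and squaring and recalling $C=\tfrac{3}{4}A^{-1/2}$ yields simply $B+C>C$, i.e.\ $B>0$. This is the standing hypothesis, which closes the argument and explains why the inequality is strict.

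There is no real conceptual obstacle here; the only risk is arithmetic, specifically keeping the fractional powers of $A$ and the constant $3/4$ straight. The choice of $\kappa$ is precisely calibrated so that $\kappa A$ and $A(1-\kappa)$ have the same denominator $B+C$, which is what makes the cascade of simplifications work and what makes the lemma hold for every positive $A$ and $B$ with no further conditions.
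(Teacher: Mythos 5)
Your proof is correct and is essentially the same as the paper's: both substitute the explicit value of $\kappa$ and reduce the inequality by direct algebra to $B+C>C$, i.e.\ $B>0$ (the paper does this by showing the ratio of the two sides equals $\sqrt{(B+C)/C}>1$, while you simplify each side separately, but the computation is the same).
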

\begin{proof}
Simplifying the following fraction
\begin{equation}\label{3lemexp}
\frac{3}{4} \frac{1}{(\kappa A)^{3/2}} \bigg{/} \frac{B}{A-A\kappa},
\end{equation}
gives
$$\frac{3}{4} \frac{1}{\sqrt{A}} (1-\kappa) \bigg{/} B\kappa^{3/2}.$$
Substituting the given $\kappa$ into above, we get
$$\frac{\sqrt{B + \frac{3}{4}\frac{1}{\sqrt{A}} }}{\sqrt{\frac{3}{4} \frac{1}{\sqrt{A}}}},$$
which is strictly greater than $1$. This completes the proof.
\end{proof}

We now let
$$A:=u^m _0 +1, \ B:=\mu +\frac{3}{2}\frac{1}{\sqrt{1+u^M _0}},$$
and
$$\accentset{\sim}{M}:=\frac{3}{4} \frac{M}{\big{\{}\kappa(u^m _0 +1) \big{\}}^{3/2}}, $$
with $\kappa$ is defined in the above lemma. i.e.,
$$\kappa=\frac{\frac{3}{4}\frac{1}{\sqrt{A}}}{B+\frac{3}{4} \frac{1}{\sqrt{A}}}=\bigg( \frac{4}{3} \mu \sqrt{1+u^m _0}  +2\frac{\sqrt{1+u^m _0}}{\sqrt{1+u^M _0}} +1  \bigg{)}^{-1}.$$
Then since,
$$t^{**} = \frac{1}{M} \bigg{\{} 1+ u^m _0 -\bigg{(}\frac{3}{4} \frac{M}{\accentset\sim{M}} \bigg{)}^{2/3}   \bigg{\}}, \ t^*= \frac{1}{\accentset{\sim}M}\bigg{(} \mu + \frac{3}{2} \frac{1}{\sqrt{1 + u^M _0}}  \bigg{)}$$
$$$$
from \eqref{3tss} and \eqref{3ts}, we have
$$\frac{t^{**}}{t^*}=\frac{\accentset\sim{M}}{M} \bigg{/} \frac{B}{B-A\kappa},$$
which is equivalent to \eqref{3lemexp}. Therefore, applying Lemma \ref{3lem3.2}, we obtain
$$t^{**} > t^{*}.$$

Next we show that the blow-up occur before $t^*$. More precisely, we show that if
\begin{equation}\label{3mcondition}
m_1(0) \leq \min\bigg{[} \frac{2K(0)}{\mu} - m_2(0), \frac{1}{\mu} \bigg{(}\frac{3}{4} \frac{M}{(\kappa(u^m _0 +1))^{3/2}} \bigg{)} \bigg{(}\mu + \frac{3}{2}\frac{1}{\sqrt{1+u^M _0}}\bigg{)}^{-1} +\frac{K(0)}{\mu} \bigg{]},
\end{equation}
then $m(t) \rightarrow -\infty$ as $t\rightarrow t^*$. From Lemma \ref{3lemma_m_b}, we observe that $m_1(0) \leq \frac{2K(0)}{\mu} -m_2 (0)$ leads to the blow-up of $m(t)$ as $t \rightarrow \frac{1}{m(0)\mu}$. Therefore, it suffices to show that $\frac{1}{m(0)\mu} \leq t^*$.

From \eqref{3mcondition}, since $m(0) = m_1 (0) - \frac{K(0)}{\mu}$ we have
$$m(0) \leq \frac{1}{\mu} \bigg{(}\frac{3}{4} \frac{M}{(\kappa(u^m _0 +1))^{3/2}} \bigg{)} \bigg{(}\mu + \frac{3}{2}\frac{1}{\sqrt{1+u^M _0}}\bigg{)}^{-1}.$$
By rearranging, 
\begin{equation*}
\begin{split}
\frac{1}{m(0)\mu} &\leq \frac{4(\kappa(u^m _0 +1))^{3/2}}{3M} \bigg{(}\mu + \frac{3}{2}\frac{1}{\sqrt{1+u^M _0}}\bigg{)}\\
&=\frac{1}{\accentset{\sim}{M}}\bigg{(}\mu + \frac{3}{2}\frac{1}{\sqrt{1+u^M _0}}\bigg{)}=t^*.
\end{split}
\end{equation*}
Hence, we complete the proof of Theorem \ref{thm_wt}.

\bibliographystyle{abbrv}

\end{document}